\providecommand\@enum@widestlabel{7}
\newtheorem{lemma}{Lemma}[section]
\newtheorem{theorem}[lemma]{Theorem}
\newtheorem{corollary}[lemma]{Corollary}
\newtheorem{proposition}[lemma]{Proposition}
\newtheorem{conjecture}[lemma]{Conjecture}
\theoremstyle{definition}
\newtheorem{remark}[lemma]{Remark}
\newtheorem{claim}[lemma]{Claim}
\renewcommand{\theequation}%
{\arabic{section}.\arabic{lemma}.\arabic{equation}}
\newcommand{\CC}{\ensuremath{\mathbb{C}}} 
\newcommand{\PP}{\ensuremath{\mathbb{P}}} 
\newcommand{\QQ}{\ensuremath{\mathbb{Q}}} 
\newcommand{\RR}{\ensuremath{\mathbb{R}}} 
\newcommand{\ZZ}{\ensuremath{\mathbb{Z}}} 
\newcommand{\sI}{\ensuremath{\kern -1pt \mathscr{I}\kern -2pt}} 
\newcommand{\sJ}{\ensuremath{\kern -2pt \mathscr{J}\kern -2pt}} 
\newcommand{\sO}{\ensuremath{\mathscr{O}}}
\renewcommand{\geq}{\geqslant}
\renewcommand{\leq}{\leqslant}
\DeclareMathOperator{\mult}{mult}
\DeclareMathOperator{\Pic}{Pic}
\newcommand{\deq}{\ensuremath{\stackrel{\textrm{def}}{=}}}
\newcommand{\Bplus}{\ensuremath{\textbf{\textup{B}}_{+} }}
\newcommand{\Bstable}{\ensuremath{\textbf{\textup{B}} }}
\begin{document}

\title{Seshadri constants of indecomposable polarized abelian varieties}

\author[V.~Lozovanu]{Victor Lozovanu}

\address{Victor Lozovanu -- Current Address: Universit\'a degli Studi di Genova, 
\newline  
\hspace*{2.74in} Dipartimento di Matematica, 
\newline  
\hspace*{2.74in} Via Dodecaneso 35, 16146, Genova, Italy.\newline
\hspace*{2.74in} \textit{Email address}: \href{lozovanu@dima.unige.it}{\nolinkurl{lozovanu@dima.unige.it}}}

\address{\hspace*{1.9in} Address: Leibniz Universit\"{a}t Hannover,  \newline
 \hspace*{2.74in} Institut f\"{u}r Algebraische Geometrie, 
 \newline
 \hspace*{2.74in}
 Welfengarten 1, 30167, Hannover, Germany.
 \newline
\hspace*{2.74in} \textit{Email address}: \href{lozovanu@math.uni-hannover.de}{\nolinkurl{lozovanu@math.uni-hannover.de}}}

\maketitle

\begin{abstract}
	The goal of this note is to study a conjectural picture on lower bounds of Seshadri constants of indecomposable polarized abelian varieties. This is inspired by some ideas of Debarre on the subject and the author's previous work on syzygies of abelian threefolds using the convex geometry of Newton--Okounkov bodies.
\end{abstract}

\section{Introduction}
Let $X$ be a smooth complex projective variety and $L$ an ample line bundle on $X$. To measure the positivity of $L$ at a point $x\in X$, Demailly defines the \textit{Seshadri constant} of $L$ at $x$ to be
\[
\epsilon(L;x) \ \deq \ \inf_{x\in C\subseteq X}\Big\{\frac{(L\cdot C)}{\mult_0(C)}\Big\} \ ,
\]
where the infimum is taken over all reduced and irreducible curves $C$ on $X$ containing $x$. This invariant encodes numerically all the "minimal curves" through $x$. It can also be seen from an infinitesimal perspective  (see \cite[Section~5]{PAG} for a nice introduction in the field). 

Two lines of research became prominent in the area.  First, differentiation techniques, used in \cite{EL93, EKL95, N96}, lead to strong lower bounds on Seshadri constant, when $x\in X$ is very general. Second, it turns out that these invariants are connected to other fields of geometry. They appear in K\"ahler geometry \cite{Ny15,Ny18}, diophantine approximation problems \cite{MR15}, convex geometry \cite{KL17} and positivity issues of abelian varieties \cite{LPP11, Loz18, KL19, Loz20}.

The main goal of this note is to give credence to a conjectural picture on lower bounds of Seshadri constants on abelian manifolds. So, let $(A,L)$ be an abelian variety of dimension $g\geq 1$. Due to the group structure on $A$, the Seshadri constant $\epsilon(L)$ of $L$ doesn't depend on the base point. Thus, differentiation techniques from \cite{ELN94} lead Nakamaye \cite{N96} to prove that
\[
\epsilon(L) \ \geq \ 1.
\]
The equality holds if and only if $(A,L)$ is a product of an abelian subvariety and an elliptic curve. Consequently, Nakamaye asked whether indecomposable pairs have much larger Seshadri constants.

With this in hand, we propose and give some credence to the following conjecture:
\begin{conjecture}[Debarre]\label{conj:main}
	Let $(A,L)$ be a $g$-dimensional indecomposable polarized abelian variety with $\epsilon(L)<2$, then  $(A,L)$ is isomorphic to the Jacobian $(J_C,\Theta_C)$ of a smooth  curve $C$ of genus $g$. Moreover, when $g\geq 4$ the curve $C$ has to be hyper-elliptic.
\end{conjecture}
Inspired by a classical conjecture of van Geemen and van der Geer \cite{GG86}, Debarre  introduces this statement in \cite{D04} for theta-divisors. He shows that for the Jacobian of hyper-elliptic curves one has $\epsilon(\Theta_C)=\frac{2g}{g+1}$. Moreover, Debarre's initial conjecture on theta divisors holds in small dimensions. When $g=3$ it was settled by Bauer-Szemberg in \cite{BS01} and when $g=4$ it follows from the ideas developed by Izadi in \cite{I95}. In higher dimensions it remains an open question.

The main goal of this paper is to show that Congecture \ref{conj:main} in this more general form follows from the original Debarre's conjecture for theta divisors.
\begin{theorem}
	\label{thm:main}
If Conjecture~\ref{conj:main} holds for any irreducible principal polarized abelian variety of dimension $\leq g$, then it holds for any indecomposable polarized abelian variety of dimension $\leq g$.
\end{theorem}
Combining with the work in \cite{BS01} and \cite{I95}, this statement provides the first non-trivial bound on the Seshadri constant for indecomposable polarized abelian manifolds of small dimension. Thus it provides a complete answer to the initial question by Nakamaye in this case.
\begin{corollary}\label{cor:main}
Conjecture \ref{conj:main} holds in dimension $g=1,2,3,4$.
\end{corollary}
The main reason behind Theorem \ref{thm:main} is the ``minimality principle'' for polarizations on abelian varieties. More precisely, \cite[Proposition~4.1.2]{BL04} yields that for any ample line bundle $L$ on $A$, there is an \'etale map of degree $k\geq 1$:
\[
f: (A,L)\ \longrightarrow \ (A_L,\Theta_L) \ ,
\]
such that $f^*(\Theta_L)=L$ and $\Theta_L$ is a theta divisor on $A_L$. Consequently, one gets $\epsilon(L)\geq  \epsilon(\Theta_L)$. 

Assuming $\epsilon(L)<2$, then Debarre's initial conjecture for principal polarizations would imply Theorem \ref{thm:main} as long as we can get contradictions when the pair $(A_L,\Theta_L)$ is either decomposable, or the Jacobian of a hyperelliptic curve of genus $g\geq 3$ or the Jacobian of a non-hyperelliptic curve of genus $g=3$.

In the first case the map $f$ allows us to find decomposable effective divisors in the linear series $|L|$. So, B\'ezout's theorem and an inductive argument will do the trick.

When $(A_L,\Theta_L)\simeq (J_C,\Theta_C)$ for a hyperelliptic curve $C$ of genus $g$, we study certain Newton-Okounov polygons of $L$ on the surface $f^*(C-C)$. These convex sets have large areas, equal to $\frac{1}{2}k\cdot g(g-1)$, due to \cite[Theorem C]{LM09}. So, we would get a contradiction once we show that geometry imposes strong conditions on the shape of these convex sets, forcing their areas to be actually small. For this, we use arithmetics and geometry to connect the Seshadri curve of $L$ to the Seshadri curve of $\Theta_C$ via the map $f$. Then, knowing strong properties on some of the Zariski chambers decomposition of the pseudo-effective cone of $\textup{Sym}^2(C)$, that desingularizes $C-C$, we find strong upper bounds on the shape of our polygons by making use of \cite[Theorem 6.4]{LM09}. 

The case when $(A_L,\Theta_L)\simeq (J_C,\Theta_C)$ for a non-hyperelliptic curve $C$ of genus $g=3$ remains the most complicated to treat. The most important issue  is that the connection between the Seshadri curve of $L$ and the one for $\Theta_C$ via the map $f$ is not that strong, and geometry will get us only some lower bounds on its numerics. To turn the table, we get some inspiration from the author's work \cite{Loz18} on syzygies of abelian threefolds. The goal is to look at the blow-up $\pi:\overline{A}\rightarrow A$ of the origin $0\in A$, and study the convex geometry in $\RR^3$ of an infinitesimal Newton-Okounkov body of $L$. With respect to previous work in the literature, for example \cite{KL17}, the difference here is that we construct this convex set by using a non-standard flags, given partially by the tangent cone at the origin of the surface $C-C\subseteq J_C$, which is a quartic in the exceptional divisor $E\simeq \PP^2$. This choice alleviates some of the issues when using differentiation techniques from \cite{CN14} and \cite{Loz18} to study the behaviour of asymptotic multiplicities, as our flags will turn out to be local complete intersections. As a consequence we get strong upper bounds on the vertical slices of our convex set, due to the low value of the Seshadri constant, but whose volume equal to $\frac{L^3}{6}=k$ by \cite{LM09} is quite large. This leads to a contradiction with one exception, when $k=2$ and the Seshadri curve of $L$ has small numerics. Since the geometry of ètale covers of degree two are much simpler to understand, we are able to get a contradiction even in this case by relying on the same methods applied to the hyperelliptic case, i.e. considering a Newton-Okounkov polygon on the surface $f^*(C-C)$.

The current bounds play an important role in the author's recent work on singularities of irreducible theta divisors in any dimension \cite{Loz20}.

\subsection*{Acknowledgements} Special thanks for the amazing support to all the members of the Institut f\"ur Algebraische Geometrie at Leibniz Universit\"at Hannover. The author is grateful to V\'ictor Gonzalez-Alonso, whose expertise on abelian varieties played an important role during this project. Also, many thanks are due to M. Fulger and A. K\"uronya for helpful discussions about some of the ideas in this article.

\section{Notations and preliminaries}
\subsection{Notations}
In this article we work over the complex numbers $\CC$. A pair $(A,L)$ stands for a $g$-dimensional abelian variety together with an ample polarization (line bundle) $L$. The pair $(A,L)$ is said to be \textit{indecomposable} if it's not isomorphic to the product of two polarized abelian varieties.

Most of the time our problems are local. So, we will be translating them to the blow-up af the origin of $A$. We fix some notation in this infinitesimal setting. We set $\pi_A:\overline{A}\rightarrow A$ to be the blow-up of the origin $0\in A$ with the exceptional divisor $E_A\simeq\PP^{g-1}$. We denote by
\[
B_t \ \deq \ \pi_A^*(L)-tE_A, \textup{ for any } t\geq 0 \ .
\]
By \cite[Proposition~5.1.5]{PAG}, then the Seshadri constant can be defined as follows
\[
\epsilon(L)\ = \ \max\{t\geq 0\ | \ B_t\textup{ is nef}\}\ .
\]
Moreover, we define the \textit{infinitesimal width} of $L$ as follows
\[
\mu(L) \ \deq \ \max\{t\geq 0\ | \ B_t\textup{ is pseudo-effective}\} \ .
\]
This is the maximum multiplicity at the origin of a $\QQ$-effective divisor in the class of $L$. When there is no confusion we denote by $\epsilon=\epsilon(L)$ and $\mu=\mu(L)$.

\subsection{Augmented base locus}
Let $L$ be a big line bundle on a smooth projective variety $X$. It's natural to consider the stable base locus $\Bstable(L)$, where all the effective $\QQ$-divisors in $L$ vanish. Unfortunately, this locus does not behave well, neither in its numerical class nor inside the Néron-Severi space of $X$. So, the authors of \cite{ELMNP06} introduced the \textit{augmented base locus} of $L$ to be 
\[
\Bplus(L) \ \deq \ \Bstable(L-\frac{1}{m}A) \ ,
\]
for an ample line bundle $A$ on $X$ and some positive integer $m\gg 0$. This locus doesn't depend on the choice of $A$ ample. It is a numerical invariant of $L$ and can be extended to any real class $\xi\in N^1(X)_{\RR}$. 

For example, when $X=\overline{A}$ is the blow-up of the abelian variety $A$ at the origin, as in the previous subsection, then the function $t\mapsto \Bplus(B_t)$ is increasing with respect to the inclusion and remains constant outside of at most countably many jumps by \cite[Lemma 1.3]{Loz18}.

Finally, to measure how bad the class of $L$ vanishes along a subvariety $V\subseteq \Bplus(L)$ we use the multiplicity. So, the \textit{asymptotic multiplicity} of $L$ along $V$ is defined to be
\[
\mult_V(||L||) \ \deq \ \lim_{m\rightarrow \infty}\ \frac{\mult_V(|mL|)}{m} \ .
\]
This definition can be extended to any real class in $N^1(X)_{\RR}$. When $X=\overline{A}$ is the above blow-up, then the function $t\mapsto \mult_V(||B_t||)$ is continuous, increasing and convex by \cite[Lemma 1.5]{Loz18}. Moreover, it has a large slope due to differentiation techniques, see \cite[Proposition 3.3]{Loz18}.

An easy numerical algorithm to check when subvarieties are contained in the augmented base locus is discussed in the following lemma:
\begin{lemma}\label{lem:curvesurface}
Let $L$ be a big line bundle on a smooth projective variety $X$. Let $\xi$ be a $\QQ$-Cartier divisor on an irreducible subvariety $Y\subseteq X$ of dimension $k$ that defines a nef class. If $(L\cdot \xi^{k-1}) <  0$, then $Y\subseteq \Bplus(L)$.
\end{lemma}
\begin{proof}
We show that a general point $x\in Y\setminus \textup{Sing}(Y)$ is contained in $\Bplus(L)$. Let $\pi:Z\rightarrow X$ be a resolution of singularities of $Y$ that is an isomorphism over $X\setminus \textup{Sing}(Y)$ (see \cite[Theorem 4.1.3]{PAG}). Let $\overline{Y}$ be the proper transform of $Y$ through the map $\pi$, which is a smooth subvariety of $Z$.

By \cite[Proposition 2.3]{BBP13} we know that $\pi^{-1}(\Bplus(L))=\Bplus(\pi^*(L))\cap \textup{Exc}(\pi)$. So, by our assumptions above it suffices to show that a general point $z\in \overline{Y}$ is also contained in $\Bplus(\pi^*(L))$.

For this note first, that $\pi^*(\xi)$ remains a $\QQ$-Cartier nef class and $(\pi^*(L)\cdot \pi^*(\xi))<0$. Now, the classical theorem of Kleiman (\cite[Theorem 1.4.23]{PAG}) implies that inside the Néron-Severi space $N^1(\overline{Y})_{\RR}$ the class of $\pi^*(\xi)$  is a limit of ample $\QQ$-Cartier classes. In particular, one can find an ample class $\eta\in N^1(\overline{Y})_{\QQ}$ such that $(\pi^*(L)\cdot \eta^{k-1})<0$. 

Let $z\in \overline{Y}$ be a general point. By Serre and Bertini's theorems, there is an integer $m\gg 0$ such that $m\cdot \eta$ is a Cartier divisor on $\overline{Y}$ and there exists divisors $D_1,\ldots ,D_{k-1}\in |m\eta|$ such that the scheme-theoretical intersection $C=D_1\cap\ldots \cap D_{k-1}$ is an irreducible curve through $z$. Since
\[
(\pi^*(L)\cdot \eta^{k-1})\ = \ \frac{1}{m^{k-1}}(\pi^*(L)\cdot D_1\cdot\ldots\cdot D_{k-1})\ = \ \frac{1}{m^{k-1}}(\pi^*(L)\cdot C)
\]
then $(\pi^*(L)\cdot C)<0$. Thus $C\subseteq \Bstable(\pi^*(L))\subseteq \Bplus(\pi^*(L))$ and $z\in \Bplus(\pi^*(L))$.

\end{proof}

\subsection{Newton-Okounkov bodies on surfaces} As pointed out in the introduction an important role in the proof of Theorem \ref{thm:main} is knowing how to describe upper bounds on the Newton-Okounkov polygons for line bundles on algebraic surfaces. 

Let $S$ be a smooth projective surface. Consider $L$ a big line bundle on $S$, $Q\subseteq S$ an irreducible curve and $P\in Q$ a smooth point. With these data we can associate the Newton-Okounkov polygon 
\[
\Delta_{Q,P)}(L) \ \subseteq \ \RR^2.
\]
Following \cite[Theorem B]{KLM12} and \cite[Section 6.2]{LM09}, this set is described by considering the Zariski decomposition of the class $L-tQ=P_t+N_t$ for any $t\in\RR$, as long as it is big. Here $P_t$ is called the positive part and is a nef class. The divisor $N_t$ is the negative part and is an effective divisor satisfying certain properties, as explained in \cite[Theorem 2.3.19]{PAG}. If $\mu=\max\{t|L-tQ \textup{ is big}\}$ and let $\nu =\textup{ord}_Q(N_0)$. With these data, we know then 
\[
\Delta_{(Q,P)}(L) \ \subseteq \ [\nu,\mu]\times \RR_+ ,
\]
and the two boundary vertical lines touch the polygon. Finally, on $[\nu,\mu]$ our polygon $\Delta_{(Q,P)}(L)$ is the area between the graphs of the functions $\alpha(t)=\textup{ord}_P(N_t|_Q)$ and $\beta(t)=\alpha(t)+(P_t\cdot Q)$.

With these in hands, we have the following lemma.
\begin{lemma}\label{lem:boundary}
Under the above notation, let $P\in Q$ be a general point and suppose that the following conditions are satisfied:
\begin{enumerate}
\item $L$ is big and nef, $Q^2<0$, $(L\cdot Q)=0$ and set $\epsilon'=\textup{max}\{t\geq 0|L-tQ -\textup{nef}\}$.
\item Let $F$ be an irreducible curve contained in $\textup{Supp}(N_t)$ for all $0<t-\epsilon'\ll 1$, and suppose there is $\mu'\in (\epsilon',\mu]$ and a real number $a>0$, such that $\textup{mult}_F(||L-tQ||)\geq a(t-\epsilon')$ for any $t\in (\epsilon',\mu')$.
\end{enumerate}
 Then $\Delta_{(Q,P)}(L)$ is contained in the triangle with vertices: $(0,0),(\epsilon',-\epsilon'\cdot Q^2), (\epsilon'\cdot\frac{a(F\cdot Q)}{Q^2+a(F\cdot Q)}, 0)$. 
\end{lemma}
\begin{proof}
In $N^1(S)_{\RR}$ the classes of $Q$ and any curve appearing in $\textup{Supp}(N_t)$ for any $t\geq 0$ lie in different half-spaces defined by the hyperplane $(L-\epsilon'Q)^{\perp}$. Thus, $Q\nsubseteq \textup{Supp}(N_t)$ and $F\neq Q$. Moreover, $P\in C$ is a general point, so by \cite[Proposition 2.1]{KLM12} we have $\alpha(t)=0$ for any $t\in \RR$.

On the interval $[0,\epsilon']$ the class $L-tQ$ is nef and thus $\beta(t)=-tQ^2$. Hence, $\Delta_{(Q,P)}(L)\cap[0,\epsilon']\times \RR$ is exactly the triangle given by the vertices $(0,0),(\epsilon',0),(\epsilon',-\epsilon'Q^2)$.

On the interval $[\epsilon',\mu')$, let's denote by $m_t=\mult_F(||L-tQ||)$. Since the divisor $L-tQ-m_tF-P_t$ is $\QQ$-effective and $Q\nsubseteq \textup{Supp}(N_t)$, then we have the following sequence of inequalities
\[
\beta(t) \ = \ (P_t\cdot Q) \ \leq \ (L-tQ-m_tF\cdot Q) \ \leq (L-tQ\cdot Q)-a(t-\epsilon')(F\cdot Q) \ ,
\]
where the graph of the function given by the bound on the right is exactly the linear function connecting the points $(\epsilon',-\epsilon'\cdot Q^2)$ and $(\epsilon'\cdot\frac{a(F\cdot Q)}{Q^2+a(F\cdot Q)}, 0)$. Thus, $\Delta_{(Q,P)}(L)\cap[\epsilon', \mu']\times \RR$ is below this line and above the horizontal axis. Moreover, our convex set contains the point $(\epsilon',-\epsilon'\cdot Q^2)$, so convexity would force it to be below this line in its entirety.
\end{proof}

\section{\'Etale covers over hyperelliptic Jacobians}

In this section we study the behaviour of the Seshadri constant for an abelian polarized variety $(A,L)$ of dimension $g$ that is an étale cover over the Jacobian of a hyperelliptic curve of genus $g$.

We start by reminding the reader of the basic positivity results on such Jacobians and then continue with the proof of the main statement.

\subsection{Hyperelliptic Jacobians.}
We start with a short review of the classical infinitesimal picture for the Jacobian of a hyper-elliptic curve (see \cite{BL04}, \cite{BS01} and \cite{L96}).

Let $C$ be a hyper-elliptic curve  of genus $g\geq 3$ and let $(J_C,\Theta_C)$ be its associated Jacobian, that can be defined as the Picard group of the degree zero line bundles on $C$. The canonical divisor $K_C$ defines a morphism  $\phi_{K_C} : C \rightarrow \PP^{g-1}$ of degree two over its image, that is a smooth rational curve $Q\subseteq \PP^{g-1}$ of degree $g-1$. It's fibers define a natural involution $\sigma:C\rightarrow C$. 

There is also a natural embedding $S_C\deq C-C\subseteq J_C$. By \cite[Theorem~11.2.5]{BL04}, the surface $S_C$ is smooth everywhere, besides the origin, where $\mult_0(S_C)=g-1$, corresponding to the unique $\phi_{K_C}\in g_2^1$. We consider the  difference map
\[
\partial \ : \  C\times C\ \xlongrightarrow\  S_C=C-C, \textup{ where } (x,y)\rightarrow x-y , 
\]
which contracts the diagonal $\Delta\subseteq C\times C$. Let $F_1$ and $F_2$ be the corresponding fibers on $C\times C$. 

Let $\pi:\overline{J}_C\rightarrow J_C$ be the blow-up of the origin and $E\simeq \PP^{g-1}$ the exceptional divisor. Restricting it to $S_C$ and its proper transform $\overline{S}_C$ through $\pi$, it yields the commutative diagram
\[
\begin{tikzcd}
C\times C \arrow{r}{\overline{q}}  \arrow[rr, bend left=25, "\partial"] & \overline{S}_C \arrow{r}[swap]{\pi|_{\overline{S}_C}}    & S_C\\
C\times C\arrow[swap]{u}{\textup{id}\times \sigma} \arrow{r}{q}& \textup{Sym}^2(C) \arrow{u}{\sim}& 
\end{tikzcd}
\]
where $q$ is the natural quotient map, . The vertical maps are isomorphisms. 

By \cite{G84} or \cite[Proposition~11.1.4]{BL04}, the differential of $\phi_{K_C}$ provides the identification
\[
\overline{S}_C\cap E=Q\ \subseteq \ E\simeq \ \PP^{g-1}\ .
\]
Numerically, $\partial^*(\Theta_C|_{S_C})=(g-1)F_1+(g-1)F_2+\Delta$ and the graph of $\sigma$ is a curve 
\[\Gamma\ \in \ |2F_1+2F_2-\Delta|.
\]
Moreover, $\overline{q}(\Delta)=Q$ and $\overline{q}$ is ramified over the proper transform $\overline{F}_C$ of $F_C=\partial(\Gamma)$.

The final property we need is the "minimality" of $F_C$ in a numerical sense as a curve on $C\times C$.
\begin{lemma}\label{lem:hyperelliptic}
	For any irreducible curve $F\neq F_C\subseteq J_C$ the following inequality holds:
	\begin{equation}\label{eq:in}
	\frac{(\Theta_C\cdot F)}{\mult_0(F)}\ \geq \ 2 \ .
	\end{equation}
In particular, $\epsilon(\Theta_C)=\frac{(\Theta_C\cdot F_C)}{\mult_0(F_C)} = \frac{2g}{g+1}$, where $(\Theta_C\cdot F_C)=4g$ and $\mult_0(F_C)=2g+2$. 
\end{lemma}
\begin{proof}
The first step is to show that any curve $F$ that does not satisfy $(\ref{eq:in})$ has to be contained in the surface $S_C$. This follows directly from Welters' theorem \cite{W86}. It states that 
\[
\{x\in \textup{Supp}(D) \ | \ D\in |2\Theta_C|, \mult_0(D)\geq 4\} \ = \ S_C\cup\{\textup{isolated points}\}
\]
For any $D\in |2\Theta_C|$ with $\mult_0(D)\geq 4$ and any curve $C\nsubseteq \textup{Supp}(D)$, Bezout's theorem yields 
	\[
	2(\Theta_C\cdot C) \ = \ (D\cdot C)\ \geq \ \mult_0(D)\cdot \mult_0(C)\ \geq \ 4\mult_0(C) \ .
	\] 
As $F$ does not satisfy $(\ref{eq:in})$, then Bezout's and Welters' theorem force $F\subseteq S_C$. 
	
The second step is to transfer the data on $C\times C$ making use of the difference map. 
Based on the proof of Proposition~\ref{prop:etale}, even if the difference map is not \'etale, one can easily show that
	\[
	\Big(\big[(g-1)(F_1+F_2)+\Delta\big]\cdot \partial^*(F)\Big)\ = \ 2(\Theta_C\cdot F) \textup{ and } \big(\partial^*(F)\cdot \Delta\big)\ = \  2\mult_0(F) \ .
	\] 
As $\Gamma\nsubseteq \textup{Supp}(\partial^*(F))$, then on $C\times C$ these formulae yield the following inequalities 
	\[
	\Big(\big[(g-1)(F_1+F_2)+\Delta\big]\cdot \partial^*(F)\Big)\ = \ \Big(\big[\Gamma+(g-3)(F_1+F_2)\big]\cdot \partial^*(F)\Big)+2\Big(\Delta\cdot \partial^*(F)\Big)\ \geq \ 2\big(\partial^*(F)\cdot\Delta\big) \ .
	\]
This contradicts the initial assumption and finishes the proof.
\end{proof}

\subsection{Seshadri constant for étale cover over hyperelliptic Jacobians}
Making use of the infinitesimal geometry of hyperelliptic Jacobians described earlier, we can now study the behaviour of the Seshari constant of polarized abelian varieties that are ètale covers over them.
\begin{theorem}\label{thm:nfoldhype}
	Let $(A,L)$ be a polarized abelian variety of dimension $g\geq 3$ that admits a degree $k\geq 2$ étale map
	\[
	f:(A,L) \ \rightarrow \ (J_C,\Theta_C),
	\]
	such that $L=f^*(\Theta_C)$ and $(J_C,\Theta_C)$ is the Jacobian of a smooth hyperelliptic curve $C$ of genus $g$. Then $\epsilon(L) \geq  2$.
\end{theorem}

\begin{proof}
	We assume the statement doesn't hold, so the goal is to find a contradiction. By the definition of the Seshadri constant and \cite{N96}, this yields the existence of a curve $F\subseteq A$ with
\[
	1\ \leq \ \epsilon(L)\ \leq \ \frac{(L\cdot F)}{\mult_0(F)} \ < \ 2 \ .
\]
Applying Proposition~\ref{prop:etale} to the map $f:(A,L)\rightarrow (J_C,\Theta_C)$ yields the following inequalities:
\[
	2  > \frac{(L\cdot F)}{\mult_0(F)}  = \frac{\textup{deg}\big(F\rightarrow f(F)\big)\cdot(\Theta_C\cdot f(F))}{\textup{deg}\big(F\rightarrow f(F)\big)\cdot \mult_{0}(f(F))-\sum_{i=2}^{i=d}\mult_{x_i}(F)}\geq \frac{(\Theta_C\cdot f(F))}{\mult_0(f(F))}  ,
\]
	where $f^{-1}(0)=\{x_1=0,x_2,\ldots ,x_k\}$.
	
Taking into account Lemma~\ref{lem:hyperelliptic}, these inequalities imply $f(F)=F_C$ and $(\Theta_C\cdot f(F))=4g$. Moreover, $f$ is a local analytical isomorphism around the origin, so $\mult_{0}(F)\leq \mult_0(F_C)=2g+2$, and consequently $\textup{deg}(F\rightarrow f(F))=1$. Finally, the same inequalities and simple arithmetics show that we need to consider only two cases for our numerical data:
	\[
	\big((L\cdot F),\textup{mult}_0(F)\big)\ \in \ \{(4g,2g+2), (4g,2g+1)\} \textup{, thus } \epsilon(L)\in \{\frac{4g}{2g+2},\frac{4g}{2g+1}\} \ .
	\]
In the following let $m:=\mult_0(F)\in\{ 2g+1,2g+2\}$. In both cases the inequalities above imply that $F$ passes through $x_1=0$ with multiplicity $m$, but only for $m=2g+1$ it passes through another point in $f^{-1}(0)$ and is smooth there. As $f:A\rightarrow J_C$ is a quotient by a finite group, thus $f^*(F_C)$ cosists of exactly $k$ irreducible components, each a translation of $F$ that satisfy the same properties.

Consider now the fiber product
\[
\begin{tikzcd}
A' \arrow{r}{\pi'} \arrow[swap]{d}{f'}  & A \arrow{d}{f} \\
\overline{J}_C \arrow{r}{\pi}& J_C& 
\end{tikzcd}
\]
where $\pi$ is blow-up of the origin. Note $\pi'$ is the blow-up of the points $f^{-1}(0)=\{x_1=0,x_2,\ldots ,x_k\}$.  So, we have $k$ exceptional divisors $E_1,\ldots ,E_k$ on $A'$, with $E_1$ being the exceptional divisor $E_A\subseteq \overline{A}$.  Denote by $Q_i\subseteq E_i$ the pre-image of the quartic $Q\subseteq E\subseteq \overline{J}_C$ for each $i=1,\ldots ,k$.

Consider now the smooth surface $S'=(f')^*(\overline{S}_C)$. Let $(f')^*(\overline{F}_C)=F_1'\cup\ldots \cup F_k'$, where each $F_i'$ is the proper transform of a component of $f^*(F_C)$ through $\pi'$, including $F$. As the  restriction map $f'|_{S'}:S'\rightarrow \overline{S}_C$ remains étale of degree $k$, the smoothness of $\overline{F}_C$ implies that the components $F'_1,\ldots ,F'_k$ are disjoint between them and each is smooth. Thus, $(F_i'\cdot F_j')=0$ for any $i\neq j=1,\ldots ,k$. 

As $f$ is a quotient by a finite group, each component $F'_i$ is a translation of any other. Thus, $(F_i'^2)$ on $S'$ does not depend on the choice of the component. In particular, this implies
\[
(F_i'^{2})\ = \ \frac{1}{k}\cdot \big(f'^*(\overline{F}_C)^2\big) \ = \ (\overline{F}_C^2) \ = \  \frac{1}{2}\big(\partial^*(F_C)^2\big) \ = \  \frac{1}{2}\big((2\Gamma )^2\big) \ = \ 4-4g \ .
\]
The same trick implies $(Q_i\cdot Q_j)=0$  for any $i\neq j=1,\ldots ,k$ and all $(Q_i^2)$ are equal. In particular, 
\[
(Q_1^2)\ = \ (Q^2) \ = \ \frac{1}{2}(\Delta^2)=1-g.
\]
Denoting $L_S\deq(\pi')^*(L)|_{S'}$, then $L_S^2=k\cdot g(g-1)$ and $(L_S\cdot Q_1)=0$. Moreover, as $F_1$ is isomorphic with $\overline{F}$ and $Q_1$ with $Q$, then $(L_S\cdot F'_1)=4g$ and $(F'_1\cdot Q_1)=m$.

Finally, we get our contradiction by applying Lemma \ref{lem:boundary} to the Newton-Okounkov polygon $\Delta_{(Q_1,P)}(L_S)$, where $P\in Q_1$ is a generic point. To do so, note first that $L_S-tQ_1$ is ample on $(0,\frac{4g}{m})$, being the restriction of $(\pi')^*(L)-tE_1$ to $S'$, which is an ample class on $A'$ in this range. Second, by Lemma \ref{lem:hyperelliptic} and the above discussion, $F_1'$ is the only curve appearing in the Zariski decomposition of the class $L_S-tQ_1$ for any $t\in (\frac{4g}{m},2)$. So, applying Zariski's algorithm from \cite[Theorem 14.14]{B01} and \cite[Lemma 2.5]{Loz18}, we get the lower bound
\[
\mult_{F_1'}(||L_S-tQ_1||)\ \geq \ \frac{mt-4g}{4g-4} \ = \ \frac{m}{4g-4}\cdot \Big(t-\frac{4g}{m}\Big), \ \ \forall t\in \Big(\frac{4g}{m},2\Big) \ .
\]
Now, by Lemma \ref{lem:boundary}, the set $\Delta_{(Q_1,P)}(L_S)$ is contained in the triangle with vertices: $(0,0)$, $(\frac{4g}{m},\frac{4g(g-1)}{m})$ and $(\frac{4gm}{m^2-4(g-1)^2},0)$. The area of the latter is $\frac{8g^2(g-1)}{m^2-4(g-1)^2}$ and is less than that of $\Delta_{(Q_1,P)}(L_S)$, which is $\frac{(L_S)^2}{2}=\frac{kg(g-1)}{2}$  by \cite{LM09}, whenever $k\geq 2$ and $g\geq 3$. Thus we get our final contradiction.
\end{proof}

\section{\'Etale covers over non-hyperelliptic Jacobians in dimension three}

In this section we study the behaviour of the Seshadri constant for an abelian polarized  threefold $(A,L)$ that is an étale cover over the Jacobian of a non-hyperelliptic curve of genus $g=3$.

We start by reminding the reader of the basic positivity results on such Jacobians and then continue with the proof of the main statement.

\subsection{Three-dimensional non-hyperelliptic Jacobians}
We start with a short review of the classical infinitesimal picture for the Jacobian for a non-hyperelliptic curve $C$ of genus three. 

By \cite[Section 11.1-11.2]{BL04} there is a canonical isomorphism $J_C\simeq \Pic^0(C)$, so that the Abel-Jacobi map provides us with an embedding $C\hookrightarrow J_C$, defined as $P\in C\mapsto \sO_C(P-P_0)\in \textup{Pic}^0(C)$ for some fixed $P_0\in C$. Moreover, we have the identification $\Theta_C\simeq C+C\simeq \textup{Sym}^2(C)$. 

The difference map $\partial :C\times C\rightarrow J_C$ defines a surface $S_C\deq C-C\in |2\Theta_C|$ with $\textup{mult}_0(S_C)=4$ and smooth elsewhere (see \cite{GG86}). Let $\pi:\overline{J}_C\rightarrow J_C$ be the blow-up at the origin with exceptional divisor $E\simeq \PP^2$ and let $\overline{S}_C$ be the proper transform of $S_C$. Then the Gauss map of the Abel-Jacobi map (or its differential) is the canonical map of $C$, i.e. the embedding
\[
 C \ \longrightarrow \ \overline{S}_C\cap E=Q\ \subseteq \ E\simeq \ \PP^2 \ ,
\]
as a planar quartic, see \cite{G84} or \cite[Proposition~11.1.4]{BL04}. Moreover, note that $\overline{S}_C\simeq C\times C$, $\partial= \pi|_{\overline{S}_C}:\overline{S}_C\rightarrow S_C$ is the blow down of the diagonal $\Delta\subseteq C\times C$, and $\partial^*(\Theta_C|_{S_C})=2F_1+2F_2+\Delta$. 

By \cite{BS01} the Seshadri contant $\epsilon(\Theta_C)$ is computed by a symmetric curve $F_C$, whose proper transform $\overline{F}_C\in |16F_1+16F_2-6\Delta|$. Geometrically, each tangent line of $Q$ intersects $Q$ in two other points, defining two symmetrically equivalent points on $F_C$. This provides the description
\[
\epsilon(\Theta_C;0) \ = \ \frac{(\Theta_C\cdot F_C)}{\mult_0(F_C)} \ = \ \frac{96}{56}\ = \  \frac{12}{7} \ .
\]
Unfortunately, there is no minimality property as in Lemma \ref{lem:hyperelliptic} for non-hyperelliptic curves. But for our purposes it will suffice to provide some helpful arithmetic properties for intersection numbers.
\begin{lemma}\label{lem:nonhyperelliptic}
If $F$ is a symmetric curve on $C\times C$, then $(F\cdot\Delta)$ is divisible by 2.
\end{lemma}
\begin{proof}
Let $q:C\times C\rightarrow \textup{Sym}^2(C)$ be the natural quotient map. If $F=\Delta$, then $F\cdot \Delta=-4$ and is divisible by 2. Suppose $F\neq\Delta$ and let $F'=q(F)$ and $\overline{F}_C'=q(\overline{F}_C)$. Since both $F$ and $\overline{F}_C$ are symmetric, then $F=q^*F'$ and $q^*\overline{F}_C'=\overline{F}_C$. Thus, projection formula yields $F\cdot\overline{F}_C=2\cdot F'\cdot\overline{F}_C'$.

Let's show that the class $\overline{F}_C'$ is divisible by $2$ in $\textup{Pic}(\textup{Sym}^2(C))$. Indeed, consider the involution $\textup{Sym}^2(C)\to \textup{Sym}^2(C)$, sending $x+y\in \textup{Sym}^2(C)$ to $\overline{xy}\cap C-(x+y)$, where $\overline{xy}$ is the line through $x$ and $y$ in $\mathbb P^2$.
This involution sends $\Delta'=q(\Delta)$ to $\overline{F}_C'$. 
However, $\frac{\Delta'}{2}$ is a Cartier divisor on $\textup{Sym}^2(C)$ (not effective, but a $\mathbb Z$-divisor nonetheless) and its pullback is $\Delta$, as it is the branching locus of $q$.

In particular, $F'\cdot\overline{F}_C'$ is even and $F\cdot\overline{F}_C$ is divisible by 4. By the class description of $\overline{F}_C$, this easily implies that $(F\cdot \Delta)$ is divisible by $2$.
\end{proof}

\subsection{Seshadri constant for ètale covers over non-hyperelliptic Jacobians in dimension three}
Making use of the infinitesimal geometry of non-hyperelliptic Jacobians in dimension three, we study the behaviour of the Seshari constant of polarized abelian three-folds that are ètale covers over them.
\begin{theorem}\label{thm:threefoldnohype}
	Let $(A,L)$ be a polarized abelian threefold that admits a degree $k\geq 2$ étale map
	\[
	f:(A,L) \ \rightarrow \ (J_C,\Theta_C),
	\]
with $L=f^*(\Theta_C)$ and $(J_C,\Theta_C)$ the Jacobian of a smooth non-hyperelliptic curve $C$ of genus $g=3$. Then $\epsilon(L) \geq  2$.
\end{theorem}

\begin{proof}
We assume the statement doesn't hold, so the goal is to find a contradiction. By the definition of the Seshadri constant and \cite{N96}, this yields the existence of a curve $F\subseteq A$ with
	\begin{equation}\label{eq:cond1}
	1\ \leq\ \epsilon(L)\ \leq \ \frac{(L\cdot F)}{\mult_0(F)} \ < \ 2 \ .
	\end{equation}
By \cite[Exercise 2.6.10]{BL04}, both $L$ and $(-1_A)^*(L)$ are numerically equivalent. Thus we can assume $F$ is symmetric. And in this case it is either irreducible or consists of two irreducible components, and each has the same numerical quotient from $(\ref{eq:cond1})$ as $F$. 

The existence of the étale cover to the non-hyperelliptic Jacobian, provides us with a surface $S_A\deq f^*(S_C)\in |L^{\otimes 2}|$, which is smooth everywhere besides the points in $f^{-1}(0)=\{0,x_2,\ldots ,x_k\}$ where it has multiplicity $4$. In particular, B\'ezout's theorem and $(\ref{eq:cond1})$ force $F\subseteq S_A$. 

Let $\overline{\Gamma}$ be the proper transform of $\Gamma\deq f(F)$ by the blow-up $\pi:\overline{J}_C\rightarrow J_C$. It is simmetric and is either irreducible or consists of two irreducible components. Thus, $(\ref{eq:cond1})$ and Proposition \ref{prop:etale} yield
	\begin{equation}\label{eq:inenonhype}
	\begin{split}
	2  > \frac{(L\cdot F)}{\mult_0(F)} & = \frac{\textup{deg}\big(F\rightarrow f(F)\big)\cdot(\Theta_C\cdot f(F))}{\textup{deg}\big(F\rightarrow f(F)\big)\cdot \mult_{0}(f(F))-\sum_{i=2}^{i=d}\mult_{x_i}(F)}\\ &\geq \frac{(\Theta_C\cdot \Gamma)}{\mult_0(\Gamma)} = \frac{(2F_1+2F_2+\Delta)\cdot \overline{\Gamma}}{\Delta\cdot \overline{\Gamma}} \geq \frac{12}{7} .
	\end{split}
	\end{equation}
With these inequalities in hand, let's find some arithmetic conditions on the numbers involved. First, as $f$ is locally an analytic isomophism, then  $\textup{mult}_0(F)\leq \textup{mult}_0(\Gamma)$. Combining this with $(\ref{eq:inenonhype})$ force $\textup{deg}\big(F\rightarrow \Gamma\big)=1$. Second, as $\overline{\Gamma}$ is symmetric (one or two components) sitting on $\overline{S}_C=C\times C$, Lemma \ref{lem:nonhyperelliptic} and $(\ref{eq:inenonhype})$ imply that there exists strictly positive integers $r, s\in \ZZ$ such that
\[
\mult_0(\Gamma) =  (\Delta\cdot \overline{\Gamma}) = 4r+2s \geq 10, \ (F_1\cdot \overline{\Gamma})=(F_2\cdot \overline{\Gamma}) = r \geq 2 \ , \textup{ and }r \geq \frac{5}{4}s >0 \ .
\]
This description imposes conditions on the pair $((L\cdot F),q\deq\mult_0(F))$. As $(L\cdot F)=(\Theta_C\cdot f(F))$, then $(L\cdot F)$ is even by Lemma \ref{lem:nonhyperelliptic}. Furthermore, $(\ref{eq:inenonhype})$ implies $(L\cdot F)\geq 18$ and $q\geq 10$. A more careful analysis allows us to discard the cases when $q=11,12,13$. Indeed, as $(L\cdot F)$ is even, the description allows only $(20,11),(22,12)$ and $(24,13)$ to be pairs for $((L\cdot F),q)$. But in all three cases $(\ref{eq:inenonhype})$ forces $q=(\Delta \cdot \overline{\Gamma})$. So, $q$ is even and this discards the first and third example. Similarly for the second example, as the description above would imply $4r=10$.

Similar ideas for the cases $q=10,14,15$, lead us to the following description of possible pairs:
\begin{equation}\label{eq:possible}
((L\cdot F),\mult_0(F)) \ \in \ \{(18,10),(26,14), (28,15)\}\bigcup \{((L\cdot F), q) \ | \ q\geq 16\},
\end{equation}
dividing our possibilities into three special cases and the general one.

For most of the cases we will get our contradiction by studying a non-standard infinitesimal Newton-Okounkov body of $L$ at the origin. The remaining cases will be dealt by making use of similar ideas, as the ones developed in the proof of Theorem \ref{thm:nfoldhype}.

In order to start, let $\pi_A:\overline{A}\rightarrow A$ be the blow-up of the origin with $E_A\simeq \PP^2$ the exceptional divisor. Let $\overline{S}_A$ be the proper transform of  $S_A$ through $\pi_A$ and $\overline{F}$ of $F$. The intersection $Q=\overline{S}_A\cap E_A\subseteq \PP^2$ is a smooth quartic. Consider the flag $Y_{\bullet}: \overline{A}\supseteq E_A\supseteq Q\supseteq \{P\}$ for some generic point $P\in Q$. The goal is to study the shape of the non-standard infinitesimal Newton-Okounkov body
\[
\Delta(L) \ \deq \  \Delta_{Y_{\bullet}}(\pi^*_A(L)) \ \subseteq  \ [0,\mu(L)]\times \RR^2 \ .
\]
Getting inspired by \cite{KL17}, for any $\mu \geq t\geq 0$, we have the inclusion
\begin{equation}\label{eq:slice}
\Delta_t(L) \ \deq \ \Delta(L)\ \cap \{t\}\times \RR^2 \ \subseteq \ \{(t,x,y)\ | \ x,y\geq 0, 16x+y\leq 4t\} \ .
\end{equation}
This follows by making use simultaneously of the valuative construction of Newton-Okounkov bodies, Bézout's theorem and the discussion in \cite[Remark 1.8]{KL18}.

By \cite{LM09} the area of each slice $\Delta_t(L)$ is exactly $\frac{1}{2}\cdot \textup{vol}_{\overline{A}|E_A}(B_t)$, where the latter is called the restricted volume and encodes asymptotically the dimension of global sections of powers of the class $B_t=\pi_A^*(L)-tE_A$ that can be restricted non-trivially to $E_A$ (see \cite{ELMNP09} and \cite{LM09} on the basic theory). As a consequence of \cite[Corollary C]{LM09}, we get the following formula
\begin{equation}\label{eq:main}
k \ \deq \ \textup{deg}(f)\  = \ \frac{L^3}{6}\ = \ \int_{0}^{\infty} \textup{Area}_{\RR^{2}}\big(\Delta_t\big) dt \ = \  \int_{0}^{\infty} \frac{1}{2}\cdot \textup{vol}_{\overline{A}|E}(B_t) dt \ .
\end{equation}
The contradiction will follow from $(\ref{eq:main})$, if we find good upper bounds on $\Delta_t(L)$ for any $t\in [0,\mu]$.

But before doing so, let's apply first these ideas and get the following useful fact:
\begin{equation}\label{eq:surfacebaselocus}
\overline{S}_A\ \nsubseteq \ \Bplus(B_t) \textup{ for any } t\in (\epsilon, 2] \ .
\end{equation}
Indeed if $\overline{S}_A\subseteq \Bplus(B_{t_0})$ for some $t_0\in(\epsilon,2]$, \cite[Lemma 1.3]{Loz18} implies the same for any $t\geq t_0$. As $B_2\equiv_{\textup{num}}\overline{S}_A$, this implies $\mu= 2$. Consequently, $(\ref{eq:slice})$ forces $\Delta(L)$ to be contained in the tetrahedron generated by the vertices $(0,0,0),(2,0,0),(2,\frac{1}{2},0)$ and $(2,0,8)$. We get our contradiction, as the the latter has volume equal $\frac{8}{6}$ and the former $\frac{L^3}{6}\geq 2$.

Turning our attention to our convex sets, let's note first that on $[0,\epsilon)$ the class $B_t$ is ample, so  \cite[Corollary~2.17]{ELMNP09} and \cite{LM09} yield
	\begin{equation}\label{eq:volume1}
	2\cdot \textup{vol}_{\RR^2}(\Delta_t(L)) \ = \ \textup{vol}_{\overline{A}|E_A}(B_t)\ = \ (B_t^2\cdot E_A) \ = \ t^2 \ \textup{ for any } t\in [0,\epsilon]. 	
	\end{equation}
In particular, both sets in $(\ref{eq:slice})$ have equal area and the inclusion there is actually an equality.

On $[\epsilon,\mu]$ the locus $\Bplus(B_t)$ contains $\overline{F}$, so \cite[Proposition 3.3]{Loz18} yields the following inequality:
	\begin{equation}\label{eq:nak1}
	\mult_{\overline{F}}(||B_t||) \ \geq \ t-\epsilon\ , \textup{ for any } t\in [\epsilon,\mu] \ ,
	\end{equation}
	where $\overline{F}$ is either irreducible or the union of two components.
	
Under these circumstances, our goal is to show the following claim:
\begin{claim}
We have the following inclusion:
\begin{equation}\label{eq:claim}
\Delta_t(L)\ \subseteq \ \Big\{(t,x,y)\ | \ x, y\geq 0, \ y+16x-4t\leq \textup{min}\big\{0, q\big(x-(t-\epsilon)\big)\big\} \Big\} \ ,
\end{equation}
for any $t\in [\epsilon, \mu]$.
\end{claim}
\begin{proof}
The reason this works is the intersection $Q=\overline{S}_A\cap E_A$ being transversal. Moreover, $\overline{F}$ is a Cartier divisor in $\overline{S}_A$, so in some sense the entire picture is similar to complete intersections. 

So, there is a cover of open Zariski-neighborhoods $U\subseteq \overline{A}$, such that on each $U$, we can assume without loss of generality that there is a local system of coordinates $\{u_1,u_2,u_3\}$, such that 
\[
\overline{S}_A|_U\ =\ \{u_2=0\} \ \textup{ and } E_A|_U\ =\ \{u_3=0\} \ .
\]
In this setup then $\overline{F}|_U=\{u_2=f(u_1,u_3)=0\}$ for some polynomial $f\in \CC[u_1,u_3]$ that might be reducible with at most two distinct irreducible components. 

Consider an effective Cartier divisor $\overline{D}\equiv_{\textup{num}} mB_t$ for some divisible enough $m\gg 0$, such that $E_A\nsubseteq \textup{Supp}(\overline{D})$. By $(\ref{eq:nak1})$ we know
\[
\mult_{\overline{F}}(\overline{D}) \ \geq \ m(t-\epsilon) \ . 
\]
Using an inductive process, the definition of the multiplicity through differential operators in variables $\{u_1,u_2,u_3\}$, and the restriction to $\overline{S}_A|_U$, we can deduce the following local description
\[
D|_U \ = \ \sum_{i=0}^{i=m(t-\epsilon-s)}g_i\cdot u_2^{m(t-\epsilon)-i}\big(f(u_1,u_3)\big)^i \ ,
\]
for some $s\leq t-\epsilon$ with each $g_i\in \CC[u_1,u_2,u_3]$ and $g_{m(t-\epsilon-s)}\neq 0$. Note here that for this to work we needed the curve $\overline{F}|_U$ to be a complete intersection as otherwise the connection between symbolic powers of ideals and powers of the actual ideal is much more complicated. In particular,
\[
\overline{D}|_{E_A\cap U} \ = \ \sum_{i=0}^{i=m(t-\epsilon-s)}g_i(u_1,u_2,0)\cdot u_2^{m(t-\epsilon)-i}\big(f(u_1,0)\big)^i \ .
\]
Taking into account this for all open sets $U$ of the cover, and taking $P\in Q$ general, thus sitting in all $U$, it remains to show that the valuation vector $\nu_{Y_{\bullet}}(\overline{D})=(t,\nu_2,\nu_3)$ satisfies the inequalities
\[
\nu_2 \ \geq \ ms \ \textup{ and } \nu_3 \ \leq \  m(4t-16\nu_2) \ - \ q\cdot \big(m\cdot (t-\epsilon)-\nu_2\big) \ ,
\]
that easily imply the statement. So, first note that the restriction of $D'\deq \overline{D}|_{E_A}\ -\nu_2\textup{div}(Q)$ to $Q$ has degree exactly $m(4t-16\nu_2)$ on $Q$. By the final local description, the divisor $D'|_Q$ vanishes at each point $R\in \overline{F}\cap Q$ with multiplicity at least $m(t-\epsilon)(\overline{F}\cdot Q)_R-\nu_2$, where $(\overline{F}\cdot Q)_R$ is the local intersection multiplicity at point $R$ of $\overline{F}$ with $Q$, as divisors on $\overline{S}_A$. As $P\notin \overline{F}\cap Q$, these ideas imply the desired inequalities. It's worth pointing out that  these ideas work out as it did earlier because our main players are local complete intersection in each $U$ even when $F$ is reducible.
\end{proof}
Moving forward, we divide the proof in three distinct cases based on the data from $(\ref{eq:possible})$.

$\textit{\underline{Case 1}:}$ $q=mult_0(F)\geq 16$.

By $(\ref{eq:claim})$, $\Delta_t(L)$ sits below the line $y=4t-16x-q(t-\epsilon-x)$ of positive slope as $q\geq 16$. It intersects the line $y=4t-16x$ in $B_t$, the $y$-axis in $C_t$ and the $x$-axis in $D_t$, described as follows:
\[
B_t=(t, t-\epsilon, 16\epsilon-12t), \ C_t=(t, 0, (4-q)t+q\epsilon), \textup{ and } D_t=(t, \frac{(q-4)t-q\epsilon}{q-16}, 0)  \ .
\]
Finally, let's denote by $O_t=(t,0,0)$ and $A_t=(t,\frac{t}{4},0)$. 

With this in hand, note that  both $B_t$ and $C_t$ are below the $x$-axis for $t\geq \frac{4}{3}\epsilon$, so $(\ref{eq:claim})$ yields $\mu(L)\leq \frac{4}{3}\epsilon$. Also $D_t$ has positive $x$-coordinate for any $t\geq \frac{q}{q-4}\epsilon$. In particular, these ideas yield the following inclusions
\[
\Delta_t(L) \ \subseteq \ \begin{cases} 
                       O_tA_tB_tC_t, \ \epsilon\leq t\leq \frac{q}{q-4}\epsilon\\
                       D_tA_tB_t, \ \frac{q}{q-4}\epsilon\leq t \leq \frac{4}{3}\epsilon
                       \end{cases} \ .
\]
Computing the areas and taking into account $(\ref{eq:main})$ and \cite{LM09}, we obtain the following inequality
\[
	2\leq k=\frac{L^3}{6} \ \leq \ \int_{0}^{\epsilon}\frac{t^2}{2} \ dt\ + \  \int_{\epsilon}^{\frac{q\epsilon}{q-4}}\frac{t^2-q(t-\epsilon)^2}{2} \ dt\ + \ \int_{\frac{q\epsilon}{q-4}}^{\frac{4}{3}\epsilon}\frac{q(4\epsilon-3t)^2}{2(q-16)} \ dt  \ .
\]
Note here that when $q=16$ the third integral does not appear. So, computing these integrals give us a much simpler inequality that works also for $q=16$.
\[
2 \ \leq \ \frac{\epsilon^3}{6} \ + \ \frac{2(3q^2-28q+16)}{3(q-4)^3}\epsilon^3\ + \ \frac{q(q-16)^2}{18(q-4)^3}\epsilon^3 \ = \ \frac{2q}{9(q-4)}\epsilon^3 \ .
\]
As  $(L\cdot F)=(\Theta_C\cdot f(F))$, then $\epsilon(L)=\frac{2q-2l}{q}$ for some $l\geq 1$ by Lemma \ref{lem:nonhyperelliptic}. Setting it in our last inequality leads to one that doesn't hold for $q\geq 16$. Thus, we get our contradiction in this case.

$\textit{\underline{Case 2}:}$ $((L\cdot F),q)\in \{(26,14),(28,15)\}$.

By $(\ref{eq:claim})$, $\Delta_t(L)$ sits below the line $y=4t-16x-q(t-\epsilon-x)$ of negative slope as $q< 16$. Let the points $O_t,A_t,B_t,C_t, D_t$ be as previously. In this case both $B_t$ and $C_t$ are below the $x$-axis whenever $t\geq \frac{q}{q-4}\epsilon$ and thus $\mu(L)\leq \frac{q}{q-4}\epsilon$. Furthermore, $D_t$ is to the left of $A_t$ while $t\geq \frac{4}{3}\epsilon$. Applying $(\ref{eq:claim})$ and convex geometry leads in this case to the following inclusions
\[
\Delta_t \ \subseteq \ \begin{cases} 
                       O_tA_tB_tC_t, \ \epsilon\leq t\leq \frac{4}{3}\epsilon \\
                       O_tD_tC_t, \ \frac{4}{3}\epsilon\leq t \leq \frac{q}{q-4}\epsilon
                       \end{cases} \ .
\]
By the same exact ideas as in the previous case, we are lead to the following inequality
\[
	2\leq k=\frac{L^3}{6} \ \leq \ \int_{0}^{\epsilon}\frac{t^2}{2} \ dt\ + \  \int_{\epsilon}^{\frac{4}{3}\epsilon}\frac{t^2-q(t-\epsilon)^2}{2} \ dt\ + \ \int_{\frac{4}{3}\epsilon}^{\frac{q}{q-4}\epsilon}\frac{(q\epsilon-(q-4)t)^2}{2(16-q)} \ dt \ .
\]
Computing the integrals above this yeilds then the inequality
\[
2 \ \leq \ \frac{\epsilon^3}{6} \ + \ \Big(\frac{(64-q)}{162}\epsilon^3-\frac{\epsilon^3}{6}\Big)\ + \ \frac{(16-q)^2}{162(q-4)}\epsilon^3 \ = \ \frac{2q}{9(q-4)}\epsilon^3
\]
that fails easily in both of our examples from this case.

$\textit{\underline{Case 3}:}$ $((L\cdot F),q)=(18,10)$.

First note that the steps taken in Case $2$ lead us again to a contradiction whenever $k=L^3/6\geq 3$. So, it remains to tackle the case $L^3=12$, i.e. $f:(A,L)\rightarrow (J_C,\Theta_C)$ has degree two. Moreover, $F$ is symmetric, so it is either irreducible with $(L\cdot F)=18$ and $\mult_{0}(F)=10$, or consists of two components $G_1$ and $G_2$ such that $(L\cdot G_i)=9$ and $\mult_0(G_i)=5$ for $i=1,2$. 

First, let's get a contradiction for the latter case. Let $\overline{\Gamma}_i$ be the proper transform through the blow-up $\pi:\overline{J}_C\rightarrow J_C$ of $f(G_i)$ for any $i=1,2$. By $(\ref{eq:inenonhype})$ note $f^*(f(G_i))$ consists of two irreducible components that don't intersect, implying $(2F_1+2F_2+\Delta\cdot \overline{\Gamma}_i)=9$ and $(\Delta\cdot \overline{\Gamma}_i)=5$. In particular, we have $(F_1\cdot \overline{\Gamma}_i)=(F_2\cdot \overline{\Gamma}_i)=1$, So, each $\overline{\Gamma}_i$ is the graph of an automorphism of $C$. In particular, a local description of the graph implies that each $\overline{\Gamma}_i$ intersects transversally $\Delta$ at exactly $5$ distinct points. As $\overline{\Gamma}_1$ is the symmetric image of $\overline{\Gamma}_2$, then this forces $(\overline{\Gamma}_1\cdot \overline{\Gamma}_2)\geq 5$. Consequently, the class $\overline{\Gamma}_1+\overline{\Gamma}_2$ must be big and nef on the surface $\overline{S}_C$. As $f^*(f(G_i))$ consists of two disjoint components, then $\overline{F}=\overline{G}_1+\overline{G}_2$ must be itself big and nef on the surface $\overline{S}_A$. As $(B_t\cdot \overline{G}_1+\overline{G}_2)<0$ for any $t>1.8$, Lemma \ref{lem:curvesurface} implies $\overline{S}_A\subseteq \Bplus(B_t)$ for any $t>1.8$, contradicting $(\ref{eq:surfacebaselocus})$.

It remains to deal with the case when $F$ is irreducible and symetric with $(L\cdot F)=18$ and $\mult_{0}(F)=10$. We can also assume that $F$ is the only curve through $0\in A$, satisfying the inequality
\begin{equation}\label{eq:condition}
\frac{(L\cdot F)}{\textup{mult}_0(F)}\ < \ 2 ,
\end{equation}
as otherwise we can reduce ourselves to the cases treated above. Moreover, $(\ref{eq:inenonhype})$ implies  $f^*(\Gamma)$ is the disjoint union of $F$ and another curve $F'$ with both isomorphic with $\Gamma\subseteq J_C$ through $f$.

The rest is similar to the proof of Theorem \ref{thm:nfoldhype}. So, consider the fiber product
\[
\begin{tikzcd}
A' \arrow{r}{\pi'} \arrow[swap]{d}{f'}  & A \arrow{d}{f} \\
\overline{J}_C \arrow{r}{\pi}& J_C& 
\end{tikzcd}
\]
where $\pi$ is blow-up of the origin of $J_C$. Note $\pi'$ is the blow-up of the two points in $f^{-1}(0)$.  So, we have two exceptional divisors $E_1,E_2$ on $A'$, where the first one is the exceptional divisor $E_A$ on $\overline{A}$. Let $Q_1\subseteq E_1$ and $Q_2\subseteq E_2$ be the pre-images of the quartic $Q\subseteq E_A$.

Consider now the smooth surface $S'=(f')^*(\overline{S}_C)$. Note $(f')^*(\overline{\Gamma})=\overline{F}\sqcup\overline{F}'$, where $\overline{F}$ and $\overline{F}'$ the proper transform through $\pi'$ of $F$ and respectively $F'$. Since $f'$ derives from an involution of $A$, sending $\overline{F}$ into $\overline{F}'$, then $(\overline{F}^2)=(\overline{F}'^2)=(\overline{\Gamma}^2)$. Consequently, adjunction formula yields
\[
2p_a(\overline{F})-2=2p_a(\overline{\Gamma})-2=(K_{C\times C}\cdot \overline{\Gamma})\ + \ (\overline{\Gamma}^2)=16+(\overline{\Gamma}^2),
\]
since $(F_1\cdot \overline{\Gamma})=(F_2\cdot \overline{\Gamma})=2$ on $\overline{S}_C=C\times C$. As one projection $C\times C\rightarrow C$ gives rise to a degree two map $\overline{\Gamma}\rightarrow C$, then Riemann-Hurwitz, as in \cite{C84}, implies then
\[
2p_a(\overline{\Gamma})-2=2(2g(C)-2)\ + \ R \ = \ 8\ +\ R \ ,
\]
for some integer $R\geq 0$, computed from the ramification divisor on the normalization of $\overline{\Gamma}$ and an effective divisor acquired from the singular locus. Consequently, we have $a:= (\overline{F}^2)=(\overline{\Gamma}^2)\geq -8$.

Moving forward note first that we can assume $a<0$, as otherwise $\overline{F}$ defines a nef class on $S'$, intersecting negatively $(\pi')^*(L)-tE_1$ for any $t>1.8$. So, by Lemma \ref{lem:curvesurface}, $S'\subseteq \Bplus((\pi')^*(L)-tE_1)$ in this range, contradicting $(\ref{eq:surfacebaselocus})$.  

It remains to deal with the cases when $a\deq-2,-4,-6,-8$. So, denote by $L_S\deq(\pi')^*(L)|_{S'}$. The same trick with the involution implies $(Q_1\cdot Q_2)=0$ and $(Q_1^2)= (Q_2^2)= (\Delta^2)=-4$. As $\overline{F}$ is isomorphic with $\overline{\Gamma}$ and $Q_1$ with $Q$, then $(L_S\cdot Q_1)=0$, $(L_S\cdot \overline{F})=18$ and $(\overline{F}\cdot Q_1)=10$.

Finally, we apply Lemma \ref{lem:boundary} to the Newton-Okounkov polygon $\Delta_{(Q_1,P)}(L_S)$, where  and $P\in Q_1$ some generic point. For $t\in(0,1.8)$ the class $L_S-tQ_1$ is ample, since it's the restriction to $S'$ of $(\pi')^*(L)-tE_1$, that is an ample class on $A'$ in this range. For $t\in(1.8,2)$ the curve $\overline{F}$ starts appearing in the Zariski decomposition of $L_S-tQ_1$. By unicity from $(\ref{eq:condition})$, it is the only negative curve appearing in the Zariski decomposition in this range. Applying Zariski's algorithm, as in \cite[Theorem 14.14]{B01}  with \cite[Lemma 2.5]{Loz18}, yields
\[
\textup{mult}_{\overline{F}}(||\overline{L}_S-tQ_1||) \ \geq \ \frac{(L_S-tQ_1\cdot \overline{F})}{(\overline{F}^2)} \ = \ \frac{18-10t}{a} \ = \ -\frac{10}{a}(t-1.8) \ \ \forall t\in (1.8,2) \ . 
\]
By Lemma \ref{lem:boundary} the set $\Delta_{(Q_1,P)}(L_S)$ is contained in the triangle with vertices $(0,0)$, $(1.8,7.2)$ and $(\frac{45}{a+25},0)$ and area equal to $\frac{162}{a+25}$. This is less than the area of $\Delta_{(Q_1,P)}(L_S)$, which by \cite{LM09} is equal $\frac{(L_S)^2}{2}=12$, for all $a=-2,-4,-6,-8$. Thus we get our final contradiction. 

\end{proof}

\section{Proof of Theorem \ref{thm:main} and Corollary \ref{cor:main}}
In this section we first study the behaviour of the Seshadri constant on abelian surfaces, by providing a very short proof, without the need of the ideas from previous sections. We finish the section by providing a proof of Theorem \ref{thm:main} and Corollary \ref{cor:main}. 

\subsection{Seshadri constants on abelian surfaces.}
In this subsection we study the Seshadri constant on polarized abelian surfaces. The main result is probably known to experts, i.e. see \cite{BS98}, but we include it here  to contrast it to the difficulty encountered for the higher-dimensional case.
\begin{proposition}\label{prop:surface}
	Let $(S,L)$ be an indecomposable polarized abelian surface that is not principle. Then $\epsilon(L)\geq 2$. 
\end{proposition}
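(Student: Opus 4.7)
The plan is to prove the equivalent statement that $L\cdot C\geq 2\mult_0(C)$ for every irreducible curve $C\subset S$, which by translations on $S$ and the description $\epsilon(L)=\max\{t\geq 0\,:\,\pi^*(L)-tE\text{ is nef}\}$ on the blowup $\pi\colon\overline S\to S$ is equivalent to $\epsilon(L)\geq 2$. Indeed, nefness of $\pi^*(L)-2E$ is tested on $E$ (for which $(\pi^*(L)-2E)\cdot E=2>0$) and on proper transforms $\widetilde C=\pi^*(C)-mE$ of irreducible curves $C\subset S$ through the origin (for which $(\pi^*(L)-2E)\cdot\widetilde C=L\cdot C-2m$). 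When $\mult_0(C)=1$, Nakamaye's theorem recalled in the introduction gives the strict bound $\epsilon(L)>1$ for every indecomposable pair, so $L\cdot C>1$; since both sides are integers this forces $L\cdot C\geq 2$, as required.

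For $m:=\mult_0(C)\geq 2$, I would combine the Hodge Index Theorem with adjunction on $S$. Since $S$ carries no rational curves, the normalization $\widetilde C$ satisfies $g(\widetilde C)\geq 1$, and the delta-invariant bound $\delta_0(C)\geq m(m-1)/2$ at the singular point yields
\[
C^2\equ 2p_a(C)-2\dgeq 2g(\widetilde C)+m(m-1)-2\dgeq m(m-1).
\]
Because $(S,L)$ is not principal, $L^2=2d_1d_2\geq 4$. If $L^2\geq 6$, Hodge Index together with this lower bound gives $(L\cdot C)^2\geq 6m(m-1)$, and a direct check verifies $L\cdot C\geq 2m$ in both sub-cases $m=2$ (where $L\cdot C\geq\lceil\sqrt{12}\,\rceil=4$) and $m\geq 3$ (where $6m(m-1)\geq 4m^2$).

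The genuinely delicate case, and the main obstacle, is $L^2=4$, that is, polarizations of type $(1,2)$, where the Hodge Index inequality is essentially tight. Here it rules out $L\cdot C\leq 2m-2$ immediately (such a bound would force $C^2\leq(m-1)^2<m(m-1)$, contradicting the estimate above), but leaves the borderline possibility $L\cdot C=2m-1$. Assuming this, the inequality $C^2\leq(2m-1)^2/4$ combined with the parity of $C^2$ and the lower bound $C^2\geq m(m-1)$ forces $C^2=m(m-1)$, so all estimates are simultaneously tight: $g(\widetilde C)=1$, the origin is the unique singular point of $C$, and it is an ordinary $m$-fold point. The normalization is then an elliptic curve $\widetilde C$ mapping to $S$, and by rigidity (after a suitable translation) this map is a group homomorphism whose image is an elliptic subgroup $E\subset S$; thus $C$ coincides set-theoretically with a translate of $E$, which is \emph{smooth}, contradicting $\mult_0(C)=m\geq 2$. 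Pinning down this rigidity step — forcing the extremal curves to be translates of elliptic subgroups and thereby incompatible with the required singularity — is the essential content of the type $(1,2)$ case.
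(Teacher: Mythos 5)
Your argument is correct, and its skeleton is the same as the paper's: both proofs combine the Hodge Index Theorem for $L$ and a putative submaximal curve $C$ with a lower bound on $C^2$ in terms of $m=\mult_0(C)$, and both dispose of the multiplicity-one case by Nakamaye's theorem. The genuine difference is where the bound on $C^2$ comes from. The paper imports $C^2\geq m^2-m+2$ directly from \cite{KSS09} (the ``$+2$'' being the gonality of the normalization, which is at least $2$ since abelian surfaces carry no rational curves), and then a single computation with $p\leq 2q-1$ settles all polarization types at once. You instead derive $C^2\geq m(m-1)+2g(\widetilde{C})-2$ from adjunction ($K_S=0$) and the delta-invariant of an $m$-fold point, and you exclude $g(\widetilde{C})\leq 1$ for a singular curve by rigidity: genus zero is impossible on an abelian surface, and a genus-one normalization would make $C$ a translate of an elliptic subgroup, hence smooth. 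Once you notice that this gives $g(\widetilde{C})\geq 2$, and therefore $C^2\geq m(m-1)+2$, for \emph{every} singular irreducible curve, your bound coincides numerically with the one from \cite{KSS09} and you could dispense with the case division on $L^2$ and the tightness analysis in type $(1,2)$ altogether: the inequality $4(m^2-m+2)\leq(2m-1)^2$ is already absurd for all $m\geq 2$. What your route buys is self-containedness --- adjunction plus rigidity in place of the moving-curves machinery of \cite{KSS09} --- and, since you bound $L\cdot C/\mult_0(C)$ for every irreducible curve rather than arguing by contradiction from a Seshadri-exceptional curve, you also avoid the Nakai--Moishezon step on the blow-up. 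What the paper's route buys is brevity, at the cost of a black-box citation.
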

\begin{remark}\label{rem:smooth}
	If $L=\Theta_S$ is an irreducible theta divisor then $\epsilon(L)=\frac{4}{3}$ by \cite{St98}. Moreover $\Theta_S$ is smooth, as otherwise the condition $\Theta_S^2=2$ forces $\epsilon(\Theta_S;0)\leq 1$. By applying then \cite{N96} this contradicts that $\Theta_S$ is irreducible. 
\end{remark}
\begin{remark}\label{rem:surface}
	Looking more carefully at the proof and making use of \cite{N96}, we deduce that for a $(S,L)$ a polarized abelian surface that is not principle, then either $\epsilon(L)=1$ and there exists an elliptic curve $F\subseteq S$ with $(L\cdot F)=1$ or $\epsilon(L)\geq 2$. 
\end{remark} 
\begin{proof}[Proof of Proposition~\ref{prop:surface}]
	Based on asymptotic Riemann-Roch and Remark~\ref{rem:smooth}, we can assume that $(L^2)=2k\geq 4$. Now, let $\epsilon(L)<2$,  and the goal is to get a contradiction.
	
	By Nakai-Moishezon criterion, used on the blow-up at the origin, the condition that $L^2\geq 4$ implies the existence of an irreducible curve $C\subseteq S$ with $q=\mult_0(C)$, $p=(L\cdot C)$, and
	\[
	1 \ < \ \epsilon(L;0)\ = \ \frac{p}{q}\ < \ 2 \ ,
	\]
	where the first inequality is due to \cite{N96}. Furthermore, \cite{KSS09}, yields $C^2\geq q^2-q+2$. Applying this statement together with Hodge index, we then get the following string of inequalities
	\[
	4 \ \leq \ 2k=(L^2) \ \leq \ \frac{(L\cdot C)^2}{C^2}\ \leq \ \epsilon(L;0)^2\cdot \frac{q^2}{q^2-q+2} \ = \ \frac{p^2}{q^2-q+2} \ . 
	\]
	As $p$ is a positive integer and $\frac{p}{q}<2$, then $p\leq 2q-1$. Plugging this upper bound into the expression on the right leads to an inequality that doesn't hold for any $q\geq 1$. This finishes the proof.
\end{proof}

\subsection{Proof of Theorem \ref{thm:main} and Corollary \ref{cor:main}}
The main goal of this subsection is to finish the proof of our main results from the introduction.

\begin{proof}[Proof of Theorem \ref{thm:main}]
When $g=2$, the statement is proved in	Proposition~\ref{prop:surface}. In the following we let $g\geq 3$. We will assume that the statement holds in any dimension at most $g-1$ and Conjecture \ref{conj:main} holds for any principle polarizartion of dimension $g$. The goal is to show that for any idencomposable polarized abelian variety $(A,L)$ of dimension $g$ that is not principle we have $\epsilon(L)\geq 2$. 

So fix such a pair $(A,L)$. Since it is not principle then by \cite[Proposition~4.1.2]{BL04} yields that for any ample line bundle $L$ on $A$, there is a degree $k\geq 2$  \'etale map of degree
\[
f: (A,L)\ \longrightarrow \ (A_L,\Theta_L) \ ,
\]
such that $f^*(\Theta_L)=L$ and $\Theta_L$ is a theta divisor on $A_L$.

When the pair $(A_L,\Theta_L)$ is the Jacobian $(J_C,\Theta_C)$ of a hyperelliptic curve $C$ of genus $g\geq 3$, then  $\epsilon(L)\geq 2$ by Theorem \ref{thm:nfoldhype}. When $(A_L,\Theta_L)$ is the Jacobian of a non-hyperlliptic curve of genus $3$, we conclude the same by Theorem \ref{thm:threefoldnohype}. It remains, by the validity of Conjecture \ref{conj:main} for principle polarizations, to tackle the case when $(A_L,\Theta_L)$ is decomposable, meaning there is an isomorphism
	\[
	(A_L,\Theta_L)\ \simeq \ (A^L_1,\Theta^L_1)\ \times\ \ldots \ \times   (A^L_r,\Theta^L_r)\ ,
	\]
	for some positive integer $r\geq 2$, where each $\Theta^L_i$ is an irreducible theta divisor.
	
In what follows fix $F\subseteq A$ an irreducible curve. Under our assumptions, note that for each $i=1,\ldots ,r$ and any smooth point $x_i\in \textup{Supp}(\Theta^L_i)$, we have the following numerical equality
	\[
	L \ \equiv_{\textup{num}} \ \sum_{i=1}^{i=r}\overbrace{f^*\big(A^L_1\times \ldots A^L_{i-1}\times(\Theta^L_i-x_i)\times A^L_{i+1}\times \ldots \times A^L_r\big)}^{D^L(x_i)} \ .
	\]
	In particular, if there exists two smooth points $x_i\in \textup{Supp}(\Theta^L_i)$ and $x_j\in \textup{Supp}(\Theta^L_j)$, so that 
	\[
	F\nsubseteq \textup{Supp}(D^L(x_i))  \textup{ and } F\nsubseteq \textup{Supp}(D^L(x_j)) \ ,
	\]
	then  B\'ezout's theorem leads to the following inequality
	\[
	(L\cdot F)\ \geq \ \mult_{0}(D^L_{i})\mult_{0}(F) \ + \ \mult_{0}(D^L_{i})\mult_{0}(F) \ = \ 2\mult_{0}(F) \ .
	\]
So, in order to show that $\epsilon(L)\geq 2$, it remains to see what happens for those curves $F$ that are contained in the support of at least $r-1$ of these divisors no matter which smooth points we are taking on the respective theta divisors. Without loss of generality we can assume that 
	\[
	F  \ \subseteq \textup{Supp}\Big(f^*\big(A^L_1\times \ldots A^L_{i-1}\times(\Theta^L_i-x_i)\times A^L_{i+1}\times\ldots \times A^L_r\big)\Big), 
	\]
	for any smooth point $x_i\in\textup{Supp}(\Theta^L_i)$ and each $i=1,\ldots ,r-1$.
	
As $\Theta^L_i$ is irreducible, then \cite[Proposition~4.4.1]{BL04} yields that the image of the Gau\ss \ map defined by it is not contained in a hyperplane. In particular, by semi-continuity this implies that 
	\[
	\bigcap_{x_i\in \textup{Supp}(\Theta^L_i) \textup{-smooth pt.}} \textup{Supp}(\Theta^L_i-x_i)\ = \ 0_{A^L_i}\ .
	\] 
	So, going back to our curve $F$, then this equality forces the following inclusion
	\[
	F \ \subseteq \ f^{-1}\big(\overbrace{\{0_{A^L_1}\}\times \ldots \ \{0_{A^L_{r-1}}\}\times A^L_r}^{A_L'}\big) \ .
	\]
As $A_L'$ is abelian, $f^{-1}(A_L')$ remains abelian. Let $A'$ be the component, containing $F$. Its dimension is less than $g$ and for the restriction map $f|_{A'} : A'\rightarrow A_L'$ we have $L|_{A'} =(f_{A'})^*(\Theta_L')$. Now, $L|_{A'}$ cannot be principle by \cite[Lemma~1]{DH07}, as $(A,L)$ is indecomposable. So, our inductive assumptions force $\epsilon(L|_{A'})\geq 2$ and we get $(L\cdot F)\geq 2\cdot \mult_0(F)$. Apllying these ideas to all the curve on $A$, the definition of Seshadri constants implies that $\epsilon(L)\geq 2$, also when $(A_L,\Theta_L)$ is decomposable.

\end{proof}

\begin{proof}[Proof of Corollary \ref{cor:main}]
When $g=3$ we know that a principle polarized abelian threefold is always a Jacobian of a smooth curve or is decomposable. Moreover, by \cite[Theorem 1]{BS01} or \cite[Theorem 7]{D04} we know the Seshadri constants as discussed in Section $3$ and Section $4$. In particular, then we obtain the statement if we take into account Theorem \ref{thm:main}.

When $g=4$, the same ideas hold if we know that an irreducible principle polarized abelian fourfold $(A,\Theta)$ with $\epsilon(\Theta)<2$ has to be a Jacobian of a hyper-elliptic curve. If $(A,\Theta)$ is not a Jacobian then by \cite[Theorem 7]{I95} we know
\[
\{x\in \textup{Supp}(D) \ | \ D\in |2\Theta|, \mult_0(D)\geq 4\} \ = \ \{0\}
\]
and hence $\epsilon(\Theta)\geq 2$ in this case. And when $(A,\Theta)$ is a Jacobian of a non-hyperelliptic curve, then $\epsilon(\Theta)=2$ by \cite[Theorem 7]{D04}. This finishes the proof.
\end{proof}

\section{Appendix: multiplicity under \'etale maps}

The goal of this section is to describe o formula for the behaviour under \'etale maps of the multiplicity at a point of a subvariety. This formula most surely is known to the experts but for completeness we include here its proof.
\begin{proposition}[Multiplicity under \'etale maps]\label{prop:etale}
	Let $f:X\rightarrow Y$ be an \'etale map of degree $d\geq 2$ between two smooth varieties. Let $V\subseteq X$ an irreducible subvariety passing through the point $x_1\in X$, and let $x_1,\ldots x_d$ be the points in the fiber $f^{-1}(f(x))$. Then we have the following inequality
	\[
	\textup{deg}\big(V\rightarrow f(V)\big)\cdot \mult_{f(x)}(f(V))\ = \ \sum_{i=1}^{i=d}\cdot \mult_{x_i}(V) \ .
	\]
	
\end{proposition}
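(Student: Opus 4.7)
The plan is to pass to the local analytic picture, where $f$ decomposes as a disjoint union of isomorphisms, and then compare both sides of the desired identity via cycle push-forward.

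First, since $f$ is étale of degree $d$ at a closed point over $\CC$, one can find a Euclidean open neighbourhood $U$ of $f(x_1)$ in $Y$ together with pairwise disjoint open neighbourhoods $U_i \ni x_i$ in $X$ satisfying $f^{-1}(U) = \bigsqcup_{i=1}^{d} U_i$, such that each restriction $f|_{U_i}: U_i \to U$ is a biholomorphism. Since the multiplicity of a subvariety at a point depends only on the formal/analytic germ, each $f|_{U_i}$ transports $V \cap U_i$ to an analytic subvariety $V_i \subseteq U$ with $\mult_{f(x_1)}(V_i) = \mult_{x_i}(V)$, where both sides are understood to be zero when $x_i \notin V$.

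The main step is the standard push-forward identity: since $V \to f(V)$ is a dominant morphism between varieties of the same dimension,
\[
f_*[V] \ = \ \deg\bigl(V \to f(V)\bigr) \cdot [f(V)]
\]
as algebraic cycles on $Y$. Restricting to the open $U$ and computing the left-hand side through the decomposition $V \cap f^{-1}(U) = \bigsqcup_i (V \cap U_i)$ yields, as analytic cycles on $U$,
\[
\deg\bigl(V \to f(V)\bigr) \cdot [f(V) \cap U] \ = \ \sum_{i=1}^{d} [V_i].
\]
Taking multiplicities at $f(x_1)$, using additivity of multiplicity on cycles, and substituting $\mult_{f(x_1)}(V_i) = \mult_{x_i}(V)$ then produces the claimed formula.

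The only point requiring some care is the interplay between algebraic and analytic cycles and their multiplicities at a point, but this is routine: multiplicity at $x$ is computed via the Hilbert--Samuel multiplicity of the completed local ring $\widehat{\sO}_{X,x}$, which is preserved under restriction to an analytic neighbourhood, and the étale property gives an isomorphism $\widehat{\sO}_{Y, f(x_1)} \cong \widehat{\sO}_{X, x_i}$ for each $i$. I do not anticipate any substantial obstacle beyond this bookkeeping.
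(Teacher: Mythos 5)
Your argument is correct, but it runs along a genuinely different track from the paper's. The paper never localizes analytically: it blows up $Y$ at $f(x_1)$ and $X$ at all $d$ preimages, invokes Fulton's formula $\mult_y(W)=-\overline{W}\cdot(-E_Y)^{\dim W}$ to convert each multiplicity into an intersection number against the exceptional divisor, uses $\overline{f}^*(E_Y)=E_1+\cdots+E_d$ together with the disjointness of the $E_i$ to produce the sum $\sum_i\mult_{x_i}(V)$, and then gets the other side of the identity from the projection formula applied to the induced map of blow-ups. You instead use that a finite \'etale cover is an analytic covering space, so $f$ splits over a small $U\ni f(x_1)$ into $d$ biholomorphisms, and you compare the two computations of the cycle $f_*[V]$ restricted to $U$: the degree formula $f_*[V]=\deg(V\to f(V))[f(V)]$ on one side, and $\sum_i[V_i]$ with $\mult_{f(x_1)}(V_i)=\mult_{x_i}(V)$ on the other, finishing with additivity of multiplicity on cycles. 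Each approach has its merits: the paper's is purely algebraic (it would work over any base field and needs no comparison between algebraic and analytic multiplicities), while yours is more transparent conceptually and avoids blow-ups entirely, at the cost of the (routine, and correctly flagged) bookkeeping that Hilbert--Samuel multiplicity only depends on the completed local ring, which the \'etale hypothesis identifies with that of $Y$ at $f(x_1)$. Both hinge on essentially the same two inputs --- the push-forward degree formula and the local triviality of $f$ --- just packaged at different levels.
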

\begin{proof}
	Let $W\subseteq Y$ be a subvariety passing through some point $y\in Y$. Denote by $\pi_Y:\overline{Y}\rightarrow Y$ the blow-up of $Y$ at the point $y$ with $E_Y$ the exceptional divisor and by $\overline{W}$ be the proper transform of $W$ through the blow-up map $\pi_Y$. Under this notation we can translate the multiplicity as an intersection number on the blow-up borrow the following \cite[p.79]{F84}:
	\begin{equation}\label{eq:mult}
	\mult_y(W) \ =\ -\overline{W}\cdot (-E_Y)^{\textup{dim}(W)}\ .
	\end{equation}
	Taking this into account, denote by $y=f(x)$ and let $W=f(V)$. With this in hand we will be using the following commutative diagram:
	\[ \begin{tikzcd}
	\overline{X} \arrow{r}{\overline{f}} \arrow[swap]{d}{\pi_X} & \overline{Y} \arrow{d}{\pi_Y} \\%
	X \arrow{r}{f}& Y
	\end{tikzcd}
	\]
	Here the map $\pi_X$ is the blow-up of $X$ at the points $x_1,\ldots ,x_d$ with the exceptional divisors $E_1,\ldots ,E_d$. Due to the fact that $f$ is an \'etale map of degree $d$, then we know for sure that
	\[
	\overline{f}^*(E_Y)\ = \ E_1\ + \ \ldots\  +\ E_d \ .
	\]
	Let $\overline{W}\subseteq \overline{Y}$ be the proper transform of $W$ through $\pi_Y$ and $\overline{V}$ of $V$ through $\pi_X$. 
	
	With these in hand we proceed to prove the main equality. Since no two divisors from $E_1,\ldots ,E_d$ intersect and making use of $(\ref{eq:mult})$ , we then deduce the following list of equalities
	\[
	\big(-\overline{V}\cdot \big(-\overline{f}^*(E)\big)^{\textup{dim}(\overline{V})} \big) \ = \ \big(-\overline{V}\cdot \big(-\sum_{i=1}^{i=d} E_i\big)^{\textup{dim}(\overline{V})} \big) \ =\ \sum_{i=1}^{i=d}\big(-\overline{V}\cdot \big(-E_i)\big)^{\textup{dim}(\overline{V})} \big) \ = \ \sum_{i=1}^{i=d} \mult_{x_i}(V) \ .
	\]
	Now projection formula for intersection numbers and the fact that $\overline{Y}=\overline{f}(\overline{V})$ yield also 
	\[
	\big(-\overline{V}\cdot \big(-\overline{f}^*(E)\big)^{\textup{dim}(\overline{V})} \big) \ = \ \textup{deg}\big(\overline{V}\rightarrow \overline{W}\big)\cdot \big(-\overline{W}\cdot \big(-E\big)^{\textup{dim}(\overline{W})} \big) \ = \ \textup{deg}\big(V\rightarrow W\big)\cdot \mult_y(W)\ .
	\]
	Finally putting together the last two sequences of equalities implies easily the statement and this finishes the proof.
	
\end{proof}

\end{document}